\documentclass[11pt,reqno]{amsart}
\calclayout
\usepackage{amsmath}
\usepackage{amssymb,amsthm,mathtools,mathrsfs}
\usepackage[shortlabels]{enumitem}
\usepackage[colorlinks=true,linkcolor=blue,citecolor=blue]{hyperref}

\numberwithin{equation}{section}

\DeclareMathOperator{\supp}{supp}
\DeclareMathOperator{\Ind}{Ind}
\DeclareMathOperator{\U}{U}
\DeclareMathOperator{\pr}{pr}

\newcommand{\C}{\mathbb{C}}
\newcommand{\R}{\mathbb{R}}
\newcommand{\Z}{\mathbb{Z}}
\newcommand{\cN}{\mathcal{N}}

\newcommand{\sL}{\mathscr{L}}
\newcommand{\norm}[1]{\| #1\|}

\begin{document}

\theoremstyle{plain}
\newtheorem{theorem}{Theorem}[section]
\newtheorem{lemma}[theorem]{Lemma}
\newtheorem{proposition}[theorem]{Proposition}
\newtheorem{corollary}[theorem]{Corollary}

\theoremstyle{definition}
\newtheorem{remark}[theorem]{Remark}

\title[Higher localised $\widehat{A}$-genera]{Higher localised $\widehat{A}$-genera: a note on disconnected fixed-point sets}

\author{Hao Guo}

\subjclass[2010]{46L80, 58B34, 53C20}
\keywords{Positive scalar curvature, fixed point theorem, twisted group $C^*$-algebra, higher index, proper actions}

\begin{abstract}
For a proper cocompact action of a finitely generated discrete group $\Gamma$ on a spin manifold $M$, the higher localised $\widehat{A}$-genera $\widehat{A}_g(M,\omega)$ introduced earlier by the author and Mathai were shown to be an obstruction to $\Gamma$-invariant metrics of positive scalar curvature, for $\omega$ arising from the subring of $H^*(\underline{B}\Gamma,\R)$ generated by elements of degree at most $2$, under the assumptions that the fixed-point set $M^g$ is connected and that $g$ is regular with respect to a certain real group $2$-cocycle. We show that both assumptions can be dropped. We also correct the normalisation of the phase functions used in that paper, which as stated there does not produce multipliers satisfying the normalisation condition required of them; with the corrected normalisation, all of its results hold as stated.
\end{abstract}

\maketitle

\section{Introduction}
\label{sec intro}
Let $\Gamma$ be a finitely generated discrete group acting properly on a connected spin manifold $M$, preserving the spin structure, such that $M/\Gamma$ is compact and $H^1(M)=0$. Fix a $\Gamma$-invariant Riemannian metric on $M$. For $g\in\Gamma$, let $M^g$ be the fixed-point submanifold of $g$ and $Z^g$ the centraliser of $g$ in $\Gamma$. 

Since the action of $Z^g$ on $M^g$ is proper and cocompact, there exists a \emph{cut-off function} $c^g\in C^\infty_c(M^g)$ for this action, satisfying 
$$\sum_{k\in Z^g}c^g(k^{-1}x)=1$$ for all $x\in M^g$. In \cite{GM}, V.~Mathai and the author defined, for a closed $\Gamma$-invariant differential form $\xi$ on $M$, the \emph{higher localised $\widehat{A}$-genus}
\begin{equation}
\label{eq A hat}
\widehat A_{g}(M,\xi)\coloneqq\int_{M^g}c^g\cdot\frac{\widehat A(M^g)\cdot\xi|_{M^g}}{\det(1-g e^{-R^{\mathcal{N}}/2\pi i})^{1/2}},
\end{equation}
where $R^{\cN}$ is the curvature of the Levi-Civita connection restricted to the normal bundle $\cN$ of $M^g$ in $M$. Here and below, we adopt the convention that an integral over $M^g$ means the sum over the connected components of $M^g$ of the integrals of the top-degree parts of the integrand; the components may have different dimensions, and only finitely many of them meet $\supp(c^g)$. If $M^g$ is empty, the integral is zero.

Let $f\colon M/\Gamma\to\underline{B}\Gamma$ be the classifying map for the action, where $\underline{B}\Gamma$ is the classifying space for proper $\Gamma$-actions \cite{BCH}. It was conjectured in \cite[Conjecture 1.1]{GM} that if $M$ admits a $\Gamma$-invariant metric of positive scalar curvature, then $\widehat{A}_g(M,\xi)=0$ for all $g\in\Gamma$ and all $\xi$ that are $\Gamma$-invariant lifts of forms on the orbifold $M/\Gamma$ representing classes in $f^*H^*(\underline{B}\Gamma,\R)$. Theorem 1.5 and Corollary 1.6 of \cite{GM} establish this for classes in the subring generated by $f^*H^1(\underline{B}\Gamma,\R)\cup f^*H^2(\underline{B}\Gamma,\R)$, whenever the conjugacy class $(g)$ has polynomial growth, under two additional assumptions: that $M^g$ is connected, and that $g$ is regular with respect to a real group $2$-cocycle $\alpha$ constructed from $\xi$. We now show that both assumptions can be dropped.

Let us recall where these assumptions come from. Let $\omega$ be a $\Gamma$-invariant exact $2$-form on $M$. For instance, $\omega$ could be the $\Gamma$-invariant lift of a form on $M/\Gamma$ representing a class in $f^*H^2(\underline{B}\Gamma,\R)$, which is exact by \cite[subsection 2.2]{GM}. Choose a one-form $\eta$ on $M$ with $d\eta=\omega$. For each $\gamma\in\Gamma$, the one-form $\gamma^*\eta-\eta$ is closed, hence exact since $H^1(M)=0$, so there exist smooth functions $\phi_\gamma$ on $M$ such that
\begin{equation}
\label{eq derivative of phi}
\gamma^*\eta-\eta=d\phi_\gamma.
\end{equation}
Since $M$ is connected, each $\phi_\gamma$ is determined up to an additive constant. Fix $x_0\in M$ and choose the constants so that
\begin{equation}
\label{eq normalisation}
\phi_\gamma(x_0)+\phi_\gamma(\gamma^{-1}x_0)=0
\end{equation}
for all $\gamma\in\Gamma$; for each $\gamma$ there is exactly one constant that achieves this. This normalisation differs from the one in \cite[(2.4)]{GM}, for a reason explained in Remark \ref{rem normalisation} below. Then
\begin{equation}
\label{eq alpha}
\alpha(\gamma,\gamma')=\frac{1}{2\pi}\big(\phi_{\gamma'}(x_0)+\phi_\gamma(\gamma'x_0)-\phi_{\gamma\gamma'}(x_0)\big)
\end{equation}
is a real group $2$-cocycle on $\Gamma$. From this we get an associated family 
$$\sigma^s\coloneqq e^{2\pi is\alpha},$$ 
for $s\in\R$, of multipliers in the sense of \cite[Definition 2.1]{GM}; see Remark \ref{rem normalisation}. Recall that an element $g\in\Gamma$ is called \emph{$\alpha$-regular} if
\begin{equation}
\label{eq alpha regular}
\alpha(g,z)=\alpha(z,g)
\end{equation}
for all $z\in Z^g$. This is the condition under which the weighted traces $\tau^{(g)}_{\sigma^s}$ on the twisted group algebras $\C^{\sigma^s}\Gamma$ are defined \cite[Corollary 3.11]{GM}. For $\alpha$-regular $g$ with $(g)$ of polynomial growth, the fixed-point formula \cite[Theorem 1.5(i)]{GM} computes the pairing of $\tau^{(g)}_{\sigma^s}$ with the higher index of the projectively invariant Dirac operator $D^s$ twisted by the connection $d+is\eta$ as an integral of the form \eqref{eq A hat}, but with an additional factor of $e^{-is\phi_g}$ in the integrand.

\begin{remark}
\label{rem normalisation}
The normalisation adopted in \cite[(2.4)]{GM} is $\phi_\gamma(\gamma^{-1}x_0)=0$. It gives $\phi_e\equiv 0$ and $\alpha(e,\gamma)=\alpha(\gamma,e)=0$, but not $\alpha(\gamma,\gamma^{-1})=0$, since by \eqref{eq alpha} one has $2\pi\alpha(\gamma,\gamma^{-1})=\phi_{\gamma^{-1}}(x_0)$. Thus, contrary to the assertion following \cite[(2.6)]{GM}, $\sigma^s$ need not satisfy condition (ii) of \cite[Definition 2.1]{GM}. We correct this presently with \eqref{eq normalisation}, after which all of the results in \cite{GM} go through. (Also, in \cite[(4.20)]{GM} and the display following it, the term $\phi_k(k^{-1}x)$ should carry a minus sign, as in \cite[(4.18)]{GM}.)
\end{remark}

Restricting \eqref{eq derivative of phi} to $M^g$ shows that $\phi_g$ is locally constant on $M^g$; see \cite[Remark 2.2]{GM}. If $M^g$ is connected, one may take $x_0\in M^g$, in which case $\phi_g$ vanishes identically on $M^g$ and the phase disappears. This is the place where the connectedness of $M^g$ was used in \cite{GM}.

The starting point of this note is the elementary observation that $\phi_g$ is in fact constant on all of $M^g$, whether or not $M^g$ is connected. To see this, note that since the action is proper, an element with a fixed point has finite order, say $r$. Summing \eqref{eq derivative of phi} over the powers of $g$ gives
$$d\Big(\sum_{j=0}^{r-1}\phi_g\circ g^j\Big)=(g^r)^*\eta-\eta=0.$$
The sum is therefore constant on $M$, since $M$ is connected, while on $M^g$ it equals $r\phi_g$. This is Lemma \ref{lem constant phase} below. We will see in Lemma \ref{lem regular} that a similar argument, applied to the central extension of $\Gamma$ defined by $\alpha$, shows that elements of finite order are automatically $\alpha$-regular. Granted these two facts, the proof of \cite[Theorem 1.5(ii)]{GM} goes through with the common value of $\phi_g$ on $M^g$ in place of its value on the single component; we carry this out in section \ref{sec proof}, and in section \ref{sec products} we treat the products of $2$-forms needed for \cite[Corollary 1.6]{GM}. The results are as follows.
\begin{theorem}
\label{thm main}
Let $\Gamma$ and $M$ be as above, and let $\omega$ be a $\Gamma$-invariant exact $2$-form on $M$. Let $g\in\Gamma$ be an element whose conjugacy class $(g)$ has polynomial growth. If $M$ admits a $\Gamma$-invariant Riemannian metric of positive scalar curvature, then
$$\widehat{A}_g(M,\omega^k)=0$$
for each $k\geq 0$.
\end{theorem}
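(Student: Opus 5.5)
The approach is the one from \cite{GM}, used for each $s\in\R$ and then combined with a polynomial-in-$s$ argument. Two ingredients replace the hypotheses we are dropping. The first is Lemma \ref{lem constant phase}: $\phi_g$ takes a single constant value $c_g$ on all of $M^g$. The second is Lemma \ref{lem regular}: $g$ has a fixed point, so it has finite order and is therefore $\alpha$-regular. If $M^g$ is empty, there is nothing to prove. Otherwise, by Lemma \ref{lem regular}, the weighted traces $\tau^{(g)}_{\sigma^s}$ on $\C^{\sigma^s}\Gamma$ are defined for every $s$ \cite[Corollary 3.11]{GM}. Since $(g)$ has polynomial growth, they extend to the relevant smooth subalgebra of the twisted reduced group $C^*$-algebra, and hence they pair with $K$-theory.

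We then apply the fixed-point formula \cite[Theorem 1.5(i)]{GM} to the projectively invariant Dirac operator $D^s$ twisted by $d+is\eta$. It gives
\begin{equation*}
\tau^{(g)}_{\sigma^s}\big(\Ind_{\sigma^s} D^s\big)=\int_{M^g}c^g\cdot\frac{\widehat A(M^g)\cdot e^{-is\phi_g}\,e^{s\omega/2\pi i}|_{M^g}}{\det(1-g e^{-R^{\cN}/2\pi i})^{1/2}},
\end{equation*}
up to the normalising constants of \cite{GM}. The formula in \cite{GM} is a sum of local contributions over components of $M^g$, so it does not use connectedness there. Because $\phi_g\equiv c_g$ on every component, the phase factors out as the nonzero scalar $e^{-isc_g}$. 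The right-hand side then becomes $e^{-isc_g}\sum_k \frac{s^k}{k!(2\pi i)^k}\widehat A_g(M,\omega^k)$, and this sum is finite because the degrees are bounded by $\dim M$.

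Now suppose $M$ carries a $\Gamma$-invariant metric of positive scalar curvature. The Lichnerowicz formula for $D^s$ contains the extra curvature term $\frac{i s}{2}c(\omega)$, where $c$ is Clifford multiplication. Since $\omega$ is $\Gamma$-invariant and $M/\Gamma$ is compact, $\omega$ is bounded. Hence for $|s|$ smaller than some $\epsilon>0$ the operator $(D^s)^2$ is bounded below by a positive constant, and $\Ind_{\sigma^s}D^s=0$. So the pairing vanishes for $|s|<\epsilon$, as in the proof of \cite[Theorem 1.5(ii)]{GM}. Multiplying by $e^{isc_g}$, the polynomial $\sum_k \frac{s^k}{k!(2\pi i)^k}\widehat A_g(M,\omega^k)$ vanishes on an interval. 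All its coefficients are therefore zero, which gives $\widehat A_g(M,\omega^k)=0$ for each $k$.

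The main obstacle is to check carefully that the proof of \cite[Theorem 1.5(i)]{GM} survives the two changes. First, the heat-kernel localisation at $M^g$ must use the phase $\phi_g$ only through its values on $M^g$; this is where constancy of $\phi_g$ on all of $M^g$, not just on each component, is needed. Second, the multipliers must be the corrected ones from \eqref{eq normalisation} (see Remark \ref{rem normalisation}), with the sign fix in \cite[(4.20)]{GM}. I would go through the trace computation term by term. The conjugation-invariance of $\tau^{(g)}_{\sigma^s}$ used there should follow from $\alpha$-regularity alone, and the phase at a fixed point should then be $e^{-isc_g}$ independently of the component.
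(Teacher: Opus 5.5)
There is a genuine gap: your argument is the paper's proof in the case $\dim M$ even, but you never restrict to that case, and the statement also covers odd-dimensional $M$. When $\dim M$ is odd, the step where you invoke \cite[Theorem 1.5(i)]{GM} is not available. The spinor bundle is not $\Z/2$-graded, so $D^s$ has no index in $K_0(C^*_r(\Gamma,\sigma^s))$; its index lives in $K_1$. The trace $\tau^{(g)}_{\sigma^s}$ only induces a map on $K_0$. So your proof does not cover odd $\dim M$, and this case needs its own argument.

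The missing ingredient is a parity argument, which the paper supplies.
\begin{itemize}
\item Since $g$ preserves the spin structure, it preserves orientation. At $x\in M^g$ it therefore acts on $\cN_x$ as an orthogonal map without eigenvalue $1$ and with determinant $1$.
\item Its non-real eigenvalues come in conjugate pairs, and its $(-1)$-eigenspace is even-dimensional. Hence $\cN$ has even rank, and every component of $M^g$ has dimension congruent to $\dim M$ modulo $2$.
\item When $\dim M$ is odd, every component of $M^g$ is odd-dimensional. But $\widehat A(M^g)$, the determinant factor and $\omega^k$ all have even degree. So the integrand in \eqref{eq A hat} has no top-degree part, and $\widehat A_g(M,\omega^k)=0$ outright, with no positive scalar curvature needed.
\end{itemize}
Orientation-preservation is essential here: a reflection-type element can have codimension-one fixed components, and then this degree count would fail.

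With this case added, the rest of your proposal matches the paper. That includes finite order and $\alpha$-regularity via Lemmas \ref{lem finite order} and \ref{lem regular}, and the fixed-point formula for every $s$. It also includes vanishing of the index for small $s$, factoring out the constant phase from Lemma \ref{lem constant phase}, and the polynomial-on-an-interval argument.

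One small correction to your last paragraph. \cite[Theorem 1.5(i)]{GM} already allows a separate phase $\phi_g(x_j)$ on each component $M^g_j$. So constancy of $\phi_g$ across components is not needed for the heat-kernel localisation, only for pulling the phase out of the sum. Even that can be bypassed by the linear independence of the functions $s^ke^{-isa}$; see Remark \ref{rem exponential polynomials}.
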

\begin{corollary}
\label{cor main}
Let $\Gamma$ and $M$ be as above, and let $g\in\Gamma$ be an element whose conjugacy class $(g)$ has polynomial growth. Then \cite[Conjecture 1.1]{GM} holds for $g$ and all $\xi$ arising from the subring of $H^*(\underline{B}\Gamma,\R)$ generated by $f^*H^1(\underline{B}\Gamma,\R)\cup f^*H^2(\underline{B}\Gamma,\R)$.
\end{corollary}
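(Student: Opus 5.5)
The statement immediately above is Corollary~\ref{cor main}. I will deduce it from Theorem~\ref{thm main} together with a treatment of products of $2$-forms and of $1$-classes. By linearity of $\xi\mapsto\widehat A_g(M,\xi)$ it suffices to treat monomials. Let $\xi$ be a product of $\Gamma$-invariant lifts of forms representing classes in $f^*H^1(\underline B\Gamma,\R)$ and in $f^*H^2(\underline B\Gamma,\R)$. For the degree-one classes I will follow \cite{GM}, where they are handled separately. A class in $f^*H^1(\underline{B}\Gamma,\R)$ lifts to a $\Gamma$-invariant closed $1$-form on $M$. Since $H^1(M)=0$, this form is exact, $d\psi$. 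Moreover, $\psi$ satisfies $\gamma^*\psi-\psi=$ const, a homomorphism $\Gamma\to\R$. A homomorphism vanishes on the finite-order element $g$, so $\psi$ is $g$-invariant. I expect, as in \cite{GM}, that products involving $1$-forms either reduce to the degree-two case (a product of two $1$-classes is a $2$-class) or contribute zero after localisation. A single $1$-form factor should be removable by a homotopy/Stokes argument using the cut-off function $c^g$. I will check that this part of \cite{GM} never used connectedness of $M^g$ or $\alpha$-regularity, so it carries over verbatim.

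The main case is $\xi=\omega_1\cdots\omega_m$ with $\Gamma$-invariant exact $2$-forms $\omega_j$. I will use polarisation. For $t=(t_1,\dots,t_m)\in\R^m$, set $\omega_t=\sum_j t_j\omega_j$, which is again a $\Gamma$-invariant exact $2$-form. Theorem~\ref{thm main} gives $\widehat A_g(M,\omega_t^k)=0$ for all $k$ and all $t$. The expression $\widehat A_g(M,\omega_t^m)$ is a homogeneous polynomial in $t$ that vanishes identically. Its coefficient of $t_1\cdots t_m$ is $m!\,\widehat A_g(M,\omega_1\cdots\omega_m)$, since the forms have even degree and commute. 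Hence $\widehat A_g(M,\xi)=0$. This avoids having to redo the twisted index argument with several cocycles $\alpha_j$ simultaneously.

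The key point is that the whole argument rests on Theorem~\ref{thm main} holding without the connectedness and regularity hypotheses, via Lemma~\ref{lem constant phase} and Lemma~\ref{lem regular}. The step I expect to be the main obstacle is the $1$-form part. There I must verify that the phase of each $\psi$, when restricted to possibly disconnected $M^g$, is constant. This follows by the same averaging trick as in Lemma~\ref{lem constant phase}: $\sum_j\psi\circ g^j$ is constant on $M$, while on $M^g$ it equals $r\psi$. With this, the reduction of \cite[Corollary 1.6]{GM} goes through for every $g$ with $(g)$ of polynomial growth.
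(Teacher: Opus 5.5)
Your polarisation argument for products of $2$-forms is correct, and it is a shortcut compared with the paper. Each $\omega_t=\sum_j t_j\omega_j$ is itself a $\Gamma$-invariant exact $2$-form, so Theorem \ref{thm main} already gives $\widehat A_g(M,\omega_t^m)=0$ for all $t$, and the coefficient of $t_1\cdots t_m$ yields the claim. The paper's Theorem \ref{thm products} instead reruns the twisted index argument for the family $\eta_{\mathbf{s}}$ and needs the index to vanish on a neighbourhood of $0\in\R^m$. Pairing an even number of $1$-form factors via $a_1\wedge a_2=d(u_1a_2)$ is also fine.

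The gap is in monomials with an odd number of degree-one factors. Your proposed mechanism, removing a single factor $a=d\psi$ by Stokes with the cut-off $c^g$, cannot work. Although $\psi$ is $g$-invariant, it is not $Z^g$-invariant: $z\mapsto z^*\psi-\psi$ is a homomorphism $Z^g\to\R$ that is typically nonzero. Hence $\int_{M^g}c^g\,d(\psi\Theta)=-\int_{M^g}dc^g\wedge\psi\Theta$ has no reason to vanish.

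In fact no argument that avoids the positive scalar curvature hypothesis can work here. Take $\Gamma=\Z^3$ acting on $M=\R^3$ by translations, $g=e$, and $\xi=dx_1\wedge(dx_2\wedge dx_3)$, a single $1$-form factor times an invariant exact $2$-form. Then $\widehat A_e(M,\xi)=\int_{\R^3}c\,dx=1$. Your averaging claim at the end is also wrong: since $g^*\psi=\psi$, the sum $\sum_j\psi\circ g^j$ equals $r\psi$ on all of $M$, which is not constant.

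What you need is a case split on $\dim M$.
\begin{itemize}
\item If $\dim M$ is even, an odd-degree $\xi$ contributes nothing by parity. Since $g$ preserves orientation, $\cN$ has even rank, so every component of $M^g$ is even-dimensional, while $\widehat A(M^g)$ and the determinant factor have even degree.
\item If $\dim M$ is odd, an odd-degree $\xi$ must genuinely be handled using positive scalar curvature. The paper does this by suspension: pass to $\widetilde M=M\times\R$ with $\widetilde\Gamma=\Gamma\times\Z$, $\tilde g=(g,0)$ and $\tilde\xi=\pr_M^*\xi\wedge dt$. Then $\widehat A_{\tilde g}(\widetilde M,\tilde\xi)=\pm\widehat A_g(M,\xi)$, and the product metric still has positive scalar curvature. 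Moreover $\tilde\xi$ has an even number of $1$-form factors on an even-dimensional manifold, so the previous case applies.
\end{itemize}

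Finally, linearity alone does not reduce you to monomials. Here $\xi$ is an arbitrary invariant representative of a class in the subring, so you also need that $\widehat A_g(M,\xi)$ depends only on the orbifold de Rham class of $\xi$. This does follow from Stokes, but only because there the primitive $\theta$ is $\Gamma$-invariant. The integral then becomes that of an exact form over the compact orbifold $M^g/Z^g$, which is exactly what fails for the non-invariant $\psi$ above.
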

In other words, the conclusions of \cite[Theorem 1.5(ii)]{GM} and \cite[Corollary 1.6]{GM} hold without assuming that $M^g$ is connected or that $g$ is $\alpha$-regular. The remaining hypotheses of \cite{GM}, namely that $M$ is connected, $H^1(M)=0$, $M/\Gamma$ is compact and $(g)$ has polynomial growth, are unchanged. The same remark applies to the index formula \cite[Theorem 1.5(i)]{GM} and to the hypersurface obstruction \cite[Theorem 1.7]{GM}; see Remarks \ref{rem index formula} and \ref{rem hypersurface}. Remark \ref{rem normalisation} corrects the normalisation of the functions $\phi_\gamma$ used in \cite{GM}; this correction is needed for the multipliers of \cite{GM} to satisfy the normalisation condition imposed on them there, and it does not affect any of the results of \cite{GM}.

\hfill\vskip 0.3in
\section{Elements of finite order}
\label{sec finite order}
\begin{lemma}
\label{lem finite order}
Let a discrete group $\Gamma$ act properly on a manifold $M$, and let $g\in\Gamma$. If $M^g\neq\emptyset$, then $g$ has finite order.
\end{lemma}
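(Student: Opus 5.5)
The plan is to show that a fixed point of $g$ forces all powers of $g$ into a finite set. I will use the standard definition: the action is proper if for every compact $K\subset M$ the set $\{\gamma\in\Gamma:\gamma K\cap K\neq\emptyset\}$ is finite. (If properness is instead phrased as properness of the map $\Gamma\times M\to M\times M$, $(\gamma,x)\mapsto(\gamma x,x)$, this finiteness condition follows at once by taking the preimage of the compact set $K\times K$, which must be compact in $\Gamma\times M$; its projection to the discrete group $\Gamma$ is then a compact, hence finite, set.)

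Choose $x\in M^g$ and take $K=\{x\}$, which is compact. Since $gx=x$, we also have $g^nx=x$ for every $n\in\Z$, so every power $g^n$ lies in the finite set
$$\{\gamma\in\Gamma:\gamma x=x\}=\{\gamma\in\Gamma:\gamma K\cap K\neq\emptyset\}.$$
In other words, the stabiliser $\Gamma_x$ is finite and contains the cyclic subgroup $\langle g\rangle$.

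The cyclic subgroup $\langle g\rangle$ is therefore finite. By pigeonhole there are integers $m<n$ with $g^m=g^n$, so $g^{n-m}=e$ with $n-m>0$, and $g$ has finite order.

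There is no real obstacle here. The only point requiring care is to make sure the definition of properness in use gives finiteness of point stabilisers, which was addressed above.
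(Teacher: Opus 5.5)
Your proof is correct and is essentially the paper's argument: a fixed point $x$ of $g$ puts $g$ (and hence $\langle g\rangle$) in the stabiliser $\Gamma_x$, which properness makes compact and therefore finite because $\Gamma$ is discrete. Your remark deriving finiteness of $\{\gamma:\gamma K\cap K\neq\emptyset\}$ from properness of $(\gamma,x)\mapsto(\gamma x,x)$ just spells out a step the paper leaves implicit.
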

\begin{proof}
If $gx=x$, then $g$ lies in the stabiliser $\Gamma_x$ of $x$, which is compact since the action is proper, hence finite since $\Gamma$ is discrete.
\end{proof}

\begin{lemma}
\label{lem constant phase}
Let $M$ be connected, let $g\in\Gamma$ have finite order $r$, and let $\eta$ and $\phi_g$ be as in \eqref{eq derivative of phi}. Then $\phi_g$ is constant on $M^g$. If the base point $x_0$ in \eqref{eq normalisation} lies in $M^g$, then $\phi_g$ vanishes on $M^g$.
\end{lemma}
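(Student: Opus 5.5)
The plan is to telescope the defining relation \eqref{eq derivative of phi} along the powers of $g$, as previewed in the introduction. Write $g^*$ for pullback by $g$. Replacing $\eta$ by $(g^j)^*\eta$ in \eqref{eq derivative of phi} with $\gamma=g$ gives
\[
(g^{j+1})^*\eta-(g^j)^*\eta=(g^j)^*d\phi_g=d(\phi_g\circ g^j),
\]
using that pullback commutes with $d$ and that $(g^j)^*g^*=(g^{j+1})^*$, with composition conventions matching those of \eqref{eq derivative of phi}. Summing over $j=0,\dots,r-1$ makes the left side telescope to $(g^r)^*\eta-\eta=0$, since $g^r=e$. So the function
\[
\Phi\coloneqq\sum_{j=0}^{r-1}\phi_g\circ g^j
\]
satisfies $d\Phi=0$ on $M$. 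Since $M$ is connected, $\Phi$ is a constant $C$.

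On $M^g$ we have $g^jx=x$, so $\Phi(x)=r\phi_g(x)$ there. Hence $\phi_g\equiv C/r$ on all of $M^g$, regardless of how many components $M^g$ has. This proves the first claim. It is worth noting that connectedness of $M$ is used, not connectedness of $M^g$, and that local constancy of $\phi_g$ on $M^g$ (\cite[Remark 2.2]{GM}) is not needed separately.

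For the second claim, suppose $x_0\in M^g$. Then $g^{-1}x_0=x_0$, so \eqref{eq normalisation} with $\gamma=g$ reads $2\phi_g(x_0)=0$, giving $\phi_g(x_0)=0$. By the first part, $\phi_g$ vanishes on all of $M^g$.

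The only delicate point is the telescoping step: one has to check the order of composition in $(g^j)^*g^*\eta$ against the convention for $\gamma^*$ so that the sum genuinely collapses to $(g^r)^*\eta-\eta$. In either convention, the sum over a full cycle of powers of $g$ still telescopes to zero. So this is bookkeeping rather than a real obstacle.
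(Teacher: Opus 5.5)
Your proof is correct and is essentially the paper's argument: the paper also forms $S_g=\sum_{j=0}^{r-1}(g^j)^*\phi_g$, telescopes $dS_g$ to $(g^r)^*\eta-\eta=0$, reads off $S_g=r\phi_g$ on $M^g$, and uses the normalisation at $x_0\in M^g$ to get $2\phi_g(x_0)=0$. One small wording fix: your displayed identity comes from applying $(g^j)^*$ to both sides of $g^*\eta-\eta=d\phi_g$, not from ``replacing $\eta$ by $(g^j)^*\eta$'', since replacing $\eta$ would change $\phi_g$.
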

\begin{proof}
Consider the smooth function
$$S_g\coloneqq\sum_{j=0}^{r-1}(g^j)^*\phi_g=\sum_{j=0}^{r-1}\phi_g\circ g^j$$
on $M$. Since $(g^j)^*g^*\eta=(g^{j+1})^*\eta$, equation \eqref{eq derivative of phi} gives
$$dS_g=\sum_{j=0}^{r-1}(g^j)^*(g^*\eta-\eta)=\sum_{j=0}^{r-1}\big((g^{j+1})^*\eta-(g^j)^*\eta\big)=(g^r)^*\eta-\eta=0,$$
so $S_g$ is equal to a constant $C$ on $M$. If $x\in M^g$, then $g^jx=x$ for all $j$, so $S_g(x)=r\phi_g(x)$. Hence $\phi_g=C/r$ on $M^g$. If $x_0\in M^g$, then $g^{-1}x_0=x_0$ and \eqref{eq normalisation} gives $2\phi_g(x_0)=0$, so $C=0$.
\end{proof}

\begin{lemma}
\label{lem regular}
Let $\alpha\in Z^2(\Gamma,\R)$. Then every element of $\Gamma$ of finite order is $\alpha$-regular. In particular, such an element is $\sigma^s$-regular for the multiplier $\sigma^s=e^{2\pi is\alpha}$, for every $s\in\R$.
\end{lemma}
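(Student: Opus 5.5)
Let $g\in\Gamma$ have finite order $r$ and fix $z\in Z^g$. The aim is $\alpha(g,z)=\alpha(z,g)$. Following the hint in the introduction, I will pass to the central extension $\widetilde\Gamma=\R\times\Gamma$ with product
$$(a,\gamma)(b,\gamma')=\big(a+b+\alpha(\gamma,\gamma'),\,\gamma\gamma'\big).$$
The cocycle identity for $\alpha$ makes this associative. Set $\tilde g=(0,g)$ and $\tilde z=(0,z)$. Then
$$\tilde g\tilde z\tilde g^{-1}\tilde z^{-1}=(c,\,gzg^{-1}z^{-1})=(c,e),$$
where $c$ is the difference $\alpha(g,z)-\alpha(z,g)$, up to coboundary-type terms coming from $\alpha(e,\cdot)$ and from the inverses. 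So the task is to show that this commutator is trivial in $\widetilde\Gamma$.

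To avoid bookkeeping with inverses, I will compute directly: $\tilde g\tilde z=(\alpha(g,z),gz)$ and $\tilde z\tilde g=(\alpha(z,g),zg)$. Since $gz=zg$, these give $\tilde z\tilde g=(\lambda,e)\,\tilde g\tilde z$ with $\lambda=\alpha(z,g)-\alpha(g,z)$ plus a correction from $\alpha(e,\cdot)$. After normalising $\alpha(e,\cdot)$, this correction vanishes.

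The key observation is that $(\lambda,e)$ is central, so conjugation by $\tilde z$ sends $\tilde g$ to $(\lambda',e)\,\tilde g$ for a central element $(\lambda',e)$, with $\lambda'=\pm\lambda$. Now compare powers:
\begin{itemize}
\item $\tilde g^r=(\mu,g^r)=(\mu,e)$ is central, where $\mu=\sum_{j=1}^{r-1}\alpha(g,g^j)$.
\item Conjugating by $\tilde z$ gives $\tilde z\tilde g^r\tilde z^{-1}=\big((\lambda',e)\tilde g\big)^r=(r\lambda',e)\,\tilde g^r$, because $(\lambda',e)$ is central.
\end{itemize}
On the other hand, $\tilde g^r$ is itself central, so conjugation fixes it. Hence $r\lambda'=0$ in $\R$, and since $\R$ is torsion-free, $\lambda=0$, i.e.\ $\alpha(g,z)=\alpha(z,g)$. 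This torsion-freeness is exactly what fails for $\mathrm{U}(1)$-valued multipliers, and it explains why the argument uses the real cocycle $\alpha$ rather than $\sigma^s$ directly.

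The main obstacle is making the normalisation bookkeeping rigorous. Identity elements and inverses in $\widetilde\Gamma$ involve $\alpha(e,e)$. To sidestep this, I will first replace $\alpha$ by a cohomologous normalised cocycle with $\alpha(e,\cdot)=\alpha(\cdot,e)=0$, and check that $\alpha(g,z)-\alpha(z,g)$ is unchanged. For a coboundary $\delta b(\gamma,\gamma')=b(\gamma')-b(\gamma\gamma')+b(\gamma)$, this difference equals $b(z)-b(gz)+b(g)-b(g)+b(zg)-b(z)=0$, using $gz=zg$. So the condition is invariant under coboundaries.

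Alternatively, I can avoid the extension altogether and prove the identity by direct cocycle manipulation. Iterating the cocycle identity gives
$$\sum_{j=0}^{r-1}\big[\alpha(g,g^jz)-\alpha(g,g^j)\big]\text{-type telescoping}.$$
More concretely, consider $\beta(\gamma)=\alpha(z,\gamma)-\alpha(\gamma,z)$ restricted to the abelian group generated by $g$ and $z$. On commuting pairs, the cocycle identity shows that $\beta$ is a homomorphism from $\langle g\rangle$ to $\R$: this is the standard fact that the antisymmetrisation of a 2-cocycle on an abelian group is bilinear. Any homomorphism from a finite cyclic group to $\R$ is trivial, so $\beta(g)=0$.

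Finally, the statement about $\sigma^s$ follows by exponentiating: $\sigma^s(g,z)=e^{2\pi i s\alpha(g,z)}=e^{2\pi i s\alpha(z,g)}=\sigma^s(z,g)$ for every $s\in\R$.
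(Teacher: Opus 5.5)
Your proposal is correct and follows the paper's proof essentially verbatim. You normalise $\alpha$ using the coboundary-invariance of $\alpha(g,z)-\alpha(z,g)$ for commuting $g,z$; then, in the central extension $\R\times_\alpha\Gamma$, you observe that $\tilde z$ conjugates $\tilde g^r$ to $(r\lambda,e)\tilde g^r$ while $\tilde g^r$ is central, which forces $r\lambda=0$ and hence $\lambda=0$ because $\R$ is torsion-free. Your alternative via bilinearity of the commutator pairing on $\langle g,z\rangle$ is also valid, being the same central-extension fact in disguise, though you should remove the garbled telescoping display and replace the hedged ``$\lambda'=\pm\lambda$'' with the exact $\lambda'=\lambda$.
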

\begin{proof}
Let $g\in\Gamma$ have order $r$, and let $z\in Z^g$. If $\beta\colon\Gamma\to\R$, then $(\delta\beta)(\gamma,\gamma')=\beta(\gamma)+\beta(\gamma')-\beta(\gamma\gamma')$ is symmetric in $\gamma$ and $\gamma'$ whenever these commute, so the quantity $\alpha(g,z)-\alpha(z,g)$ depends only on the class of $\alpha$ in $H^2(\Gamma,\R)$. We may therefore assume that $\alpha(e,\gamma)=\alpha(\gamma,e)=0$ for all $\gamma$. (The cocycle \eqref{eq alpha} already has this property, since $\phi_e\equiv 0$.)

Let $E=\R\times_\alpha\Gamma$ be the central extension of $\Gamma$ by $\R$ defined by $\alpha$, with product
$$(u,\gamma)(v,\gamma')=(u+v+\alpha(\gamma,\gamma'),\gamma\gamma').$$
Put $A=(0,g)$ and $B=(0,z)$. Then $AB=(\alpha(g,z),gz)$ and $BA=(\alpha(z,g),gz)$, so
$$AB=(c,e)BA,\qquad c\coloneqq\alpha(g,z)-\alpha(z,g),$$
where $(c,e)$ lies in the central subgroup $\R\times\{e\}$. By induction, $A^nB=(nc,e)BA^n$ for all $n\geq 1$. Now $A^r$ lies in $\R\times\{e\}$, since $g^r=e$, and is therefore central. Taking $n=r$ gives $(rc,e)=(0,e)$, so $rc=0$ and hence $c=0$. The last statement follows by exponentiating \eqref{eq alpha regular}.
\end{proof}

\begin{remark}
The proof relies on the fact that $\R$ has no torsion. For a general multiplier $\sigma\in Z^2(\Gamma,\U(1))$, elements of finite order need not be $\sigma$-regular: if $\Gamma=\Z/2\times\Z/2$ and $\sigma$ represents the non-trivial class in $H^2(\Gamma,\U(1))$, then $\C^\sigma\Gamma$ is the quaternion algebra, in which the generators anticommute. The multipliers appearing in \cite{GM} are, however, all of the form $e^{2\pi is\alpha}$ with $\alpha$ real-valued, so Lemma \ref{lem regular} applies.
\end{remark}

\hfill\vskip 0.3in
\section{Proof of Theorem \ref{thm main}}
\label{sec proof}
We will use the following observation about the dimensions of the components of $M^g$. Since $g$ preserves the spin structure, it preserves the orientation of $M$. At a point $x\in M^g$, $g$ acts on $T_xM=T_xM^g\oplus\cN_x$ as the identity on the first summand and as an orthogonal transformation without eigenvalue $1$ on the second. The non-real eigenvalues of the latter come in conjugate pairs, and its $(-1)$-eigenspace is even-dimensional because its determinant is $1$. Hence the rank of $\cN$ is even on every component of $M^g$, and each component of $M^g$ has dimension congruent to $\dim M$ modulo $2$.

\begin{proof}[Proof of Theorem \ref{thm main}]
If $M^g=\emptyset$ there is nothing to prove, so assume $M^g\neq\emptyset$. Then $g$ has finite order by Lemma \ref{lem finite order}, and is $\alpha$-regular by Lemma \ref{lem regular}.

Suppose first that $\dim M$ is odd. Then every component of $M^g$ is odd-dimensional, while $\widehat A(M^g)$, the determinant factor in \eqref{eq A hat}, and $\omega^k$ all have even degree. Hence the integrand in \eqref{eq A hat} has no top-degree part on any component, so $\widehat A_g(M,\omega^k)=0$.

Next, suppose $\dim M$ is even. Fix a $\Gamma$-invariant metric of positive scalar curvature on $M$. For $s\in\R$, let $D^s$ be the $(\Gamma,\sigma^s)$-invariant Dirac operator of \cite[Definition 2.4]{GM}. In other words, it is the spin-Dirac operator twisted by the connection 
$$\nabla^{\sL,s}=d+is\eta$$ on a trivial line bundle $\sL$. By Remark \ref{rem normalisation}, Lemma \ref{lem regular} and \cite[Corollary 3.11]{GM}, the trace $\tau^{(g)}_{\sigma^s}$ is defined for every $s$, and since $(g)$ has polynomial growth, it induces a map $(\tau^{(g)}_{\sigma^s})_*$ on $K_0(C^*_r(\Gamma,\sigma^s))$. The proof of \cite[Theorem 1.5(i)]{GM} (in particular, see \cite[Theorem 4.5 and (4.23)]{GM}) then gives, for every $s\in\R$,
\begin{equation}
\label{eq fixed point formula}
\big(\tau^{(g)}_{\sigma^s}\big)_*\Ind_{\Gamma,\sigma^s}(D^s)=\int_{M^g}e^{-is\phi_g}c^g\cdot\frac{\widehat A(M^g)\cdot e^{-s\omega/2\pi i}|_{M^g}}{\det(1-g e^{-R^{\mathcal{N}}/2\pi i})^{1/2}}.
\end{equation}
(Note that the normalisation $-s\omega/2\pi i$ of the exponent, which depends on the convention for the Chern character, plays no role below; what matters is that the coefficient of $s\omega$ is a non-zero constant.) By \cite[Corollary 4.8]{GM}, there exists $\delta>0$ such that $\Ind_{\Gamma,\sigma^s}(D^s)=0$ for all $s\in(0,\delta)$. By Lemma \ref{lem constant phase}, there is a constant $a_g\in\R$ such that $\phi_g\equiv a_g$ on $M^g$. Expanding the exponential, \eqref{eq fixed point formula} gives
\begin{equation}
\label{eq polynomial}
0=e^{-isa_g}\sum_{k=0}^{\dim M/2}\frac{1}{k!}\Big(\frac{-s}{2\pi i}\Big)^k\,\widehat{A}_g(M,\omega^k)
\end{equation}
for all $s\in(0,\delta)$. This sum is a polynomial in $s$ that vanishes on an interval, hence all of its coefficients are zero. The result follows.
\end{proof}

\begin{remark}
\label{rem index formula}
Lemma \ref{lem constant phase} also simplifies the index formula \cite[Theorem 1.5(i)]{GM}. In the notation used there, the values $\phi_g(x_j)$ on the components $M^g_j$ of $M^g$ all coincide with $a_g$, so that
\begin{align*}
\big(\tau^{(g)}_{\sigma}\big)_*\Ind_{\Gamma,\sigma}(D)&=e^{-ia_g}\int_{M^g}c^g\cdot\frac{\widehat A(M^g)\cdot e^{-\omega/2\pi i}|_{M^g}}{\det(1-g e^{-R^{\mathcal{N}}/2\pi i})^{1/2}}\\
&=e^{-ia_g}\sum_{k\geq 0}\frac{1}{k!}\Big(\frac{-1}{2\pi i}\Big)^k\,\widehat{A}_g(M,\omega^k),
\end{align*}
and $a_g=0$ if $x_0$ is chosen to lie in $M^g$. Moreover, by Lemma \ref{lem regular}, the hypothesis in \cite[Theorem 1.5]{GM} that $g$ be $\alpha$-regular is automatically satisfied whenever $M^g\neq\emptyset$.
\end{remark}

\begin{remark}
\label{rem exponential polynomials}
Lemma \ref{lem constant phase} is not strictly necessary for Theorem \ref{thm main}. Without it, \eqref{eq fixed point formula} and the vanishing of the index give $\sum_j e^{-isa_j}P_j(s)=0$ for $s\in(0,\delta)$, where the sum is over the components $M^g_j$ of $M^g$ meeting $\supp(c^g)$, $a_j$ is the value of $\phi_g$ on $M^g_j$, and $P_j$ is a polynomial. Since the functions $s^ke^{-isa}$, for distinct pairs $(k,a)$, are linearly independent on any interval, we have $\sum_{j\colon a_j=a}P_j=0$ for each value $a$. Summing over $a$ gives $\sum_j P_j=0$, which is \eqref{eq polynomial} with the phase removed. We have given the argument via Lemma \ref{lem constant phase} because it yields the stronger statement that the phases on the components agree, and this is what is needed in Remark \ref{rem index formula}.
\end{remark}

\begin{remark}
\label{rem hypersurface}
The same applies to \cite[Theorem 1.7]{GM}, where $H\subseteq M$ is a connected $\Gamma$-cocompact hypersurface with trivial normal bundle, $M$ carries a complete $\Gamma$-invariant metric with non-negative scalar curvature that is positive near $H$, and $H^g$ is assumed connected. There the functions $\phi_\gamma$ used on $H$ are the restrictions of the functions \eqref{eq derivative of phi} on $M$ (see \cite[subsection 5.2]{GM}), and the connectedness of $H^g$ enters only through the requirement that $\phi_g$ be constant on $H^g$. Since $H^g\subseteq M^g$ and $M$ is connected, Lemma \ref{lem constant phase} implies this without any assumption on $H^g$. Thus the conclusion $\widehat A_g(H,\omega^k|_H)=0$ for all $k\geq 0$ holds whenever $(g)$ has polynomial growth.
\end{remark}

The phase in \eqref{eq fixed point formula} can also be removed by changing the primitive $\eta$ rather than the base point $x_0$.
\begin{proposition}
\label{prop averaging}
Let $g\in\Gamma$ have finite order $r$, and let $\omega=d\eta$ be $g$-invariant. Then
$$\bar\eta\coloneqq\frac{1}{r}\sum_{j=0}^{r-1}(g^j)^*\eta$$
satisfies $d\bar\eta=\omega$ and $g^*\bar\eta=\bar\eta$. If $H^1(M)=0$, then $\bar\eta-\eta=df$ for a smooth function $f$ on $M$, and the functions
$$\bar\phi_\gamma\coloneqq\phi_\gamma+\gamma^*f-f+b_\gamma,\qquad\gamma\in\Gamma,$$
with $b_\gamma\in\R$ constants, satisfy $\gamma^*\bar\eta-\bar\eta=d\bar\phi_\gamma$. The constants $b_\gamma$ can be chosen so that the $\bar\phi_\gamma$ are normalised as in \eqref{eq normalisation} and $\bar\phi_g\equiv 0$, and the resulting cocycle $\bar\alpha$ differs from $\alpha$ by the coboundary of $\gamma\mapsto b_\gamma/2\pi$.
\end{proposition}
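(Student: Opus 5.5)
The plan is to verify each claim by direct computation, in the order stated.

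\emph{Properties of $\bar\eta$.} Since $d$ commutes with pullback and $\omega$ is $g$-invariant, we get $d\bar\eta=\frac1r\sum_j (g^j)^*\omega=\omega$. For invariance, compute
\[
g^*\bar\eta=\frac1r\sum_{j=0}^{r-1}(g^{j+1})^*\eta ,
\]
which is a cyclic reindexing of the same sum because $g^r=e$; hence $g^*\bar\eta=\bar\eta$.

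\emph{The functions $\bar\phi_\gamma$.} The form $\bar\eta-\eta$ is closed, so $H^1(M)=0$ gives $\bar\eta-\eta=df$. Then
\[
\gamma^*\bar\eta-\bar\eta=(\gamma^*\eta-\eta)+(\gamma^*df-df)=d(\phi_\gamma+\gamma^*f-f),
\]
and adding a constant $b_\gamma$ does not change the derivative.

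\emph{Choice of the constants.} Since $g^*\bar\eta-\bar\eta=0$, the function $\phi_g+g^*f-f$ is constant on $M$ (here we use that $M$ is connected). We choose $b_g$ to cancel this constant, so that $\bar\phi_g\equiv0$. Condition \eqref{eq normalisation} at $\gamma=g$ then holds trivially. For $\gamma\neq g$, the normalisation condition is affine in $b_\gamma$ with coefficient $2$, so exactly one $b_\gamma$ achieves it.

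This is the main point to check: the two requirements on $b_g$ must be compatible. They are, because $\bar\phi_g\equiv0$ already satisfies \eqref{eq normalisation}, and by the uniqueness of the normalising constant it is the normalised choice. In particular, no base point in $M^g$ is needed.

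\emph{The cocycle.} Write $\psi_\gamma=\gamma^*f-f$, and substitute $\bar\phi_\gamma=\phi_\gamma+\psi_\gamma+b_\gamma$ into \eqref{eq alpha}. The $\psi$-contributions are
\[
\psi_{\gamma'}(x_0)+\psi_\gamma(\gamma'x_0)-\psi_{\gamma\gamma'}(x_0).
\]
Using $\psi_\gamma(y)=f(\gamma y)-f(y)$ with $\gamma^*f=f\circ\gamma$, this becomes
\[
f(\gamma'x_0)-f(x_0)+f(\gamma\gamma'x_0)-f(\gamma'x_0)-f(\gamma\gamma'x_0)+f(x_0)=0,
\]
so it cancels telescopically. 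The constants contribute $\frac1{2\pi}(b_{\gamma'}+b_\gamma-b_{\gamma\gamma'})$, which is the coboundary of $\gamma\mapsto b_\gamma/2\pi$ with the sign convention of Lemma \ref{lem regular}. Therefore $\bar\alpha=\alpha+\delta(b/2\pi)$.

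This relies on the convention $\gamma^*\phi=\phi\circ\gamma$, which is the one implicit in the proof of Lemma \ref{lem constant phase}.
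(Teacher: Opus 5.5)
Your proposal is correct and follows essentially the same route as the paper: averaging and cyclic reindexing for $\bar\eta$, exactness of $\bar\eta-\eta$, choosing $b_g$ to cancel the constant $\bar\phi_g$ (which is compatible with \eqref{eq normalisation}), and showing that the $f$-terms drop out of the cocycle. The only cosmetic difference is that you verify the telescoping cancellation by evaluating directly at $x_0$ in \eqref{eq alpha}. The paper instead substitutes $\bar\phi$ into $\phi_{\gamma'}+\gamma'^*\phi_\gamma-\phi_{\gamma\gamma'}$, which is constant on $M$ by \cite[(2.3)]{GM}.
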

\begin{proof}
Since $\omega$ is $g$-invariant, $d\bar\eta=\frac{1}{r}\sum_j(g^j)^*\omega=\omega$, and since $g^r=e$, reindexing the sum gives $g^*\bar\eta=\bar\eta$. The one-form $\bar\eta-\eta$ is closed, hence exact. Next, $d(\gamma^*f-f)=\gamma^*(\bar\eta-\eta)-(\bar\eta-\eta)$, so $d\bar\phi_\gamma=\gamma^*\bar\eta-\bar\eta$ for any constants $b_\gamma$. As $d\bar\phi_g=g^*\bar\eta-\bar\eta=0$, the function $\bar\phi_g$ is constant, and we may take $b_g$ so that it is zero; this is compatible with \eqref{eq normalisation}. Finally, the expression $\phi_{\gamma'}+\gamma'^*\phi_\gamma-\phi_{\gamma\gamma'}$ is constant on $M$ by \cite[(2.3)]{GM}, and its value at $x_0$ is $2\pi\alpha(\gamma,\gamma')$ by \eqref{eq alpha}. Substituting $\bar\phi$ for $\phi$, the terms involving $f$ cancel, and what remains is $2\pi\alpha(\gamma,\gamma')+b_{\gamma'}+b_\gamma-b_{\gamma\gamma'}$.
\end{proof}
Going through the proof of Theorem \ref{thm main} with $\bar\eta$ in place of $\eta$, we see that the connection $d+is\bar\eta$ has the same curvature $is\omega$, so \cite[Corollary 4.8]{GM} applies as before, while the phase in \eqref{eq fixed point formula} is absent. Since $\alpha$ and $\bar\alpha$ are cohomologous, $g$ is $\bar\alpha$-regular if and only if it is $\alpha$-regular, which agrees with Lemma \ref{lem regular}.

\hfill\vskip 0.3in
\section{Products and the proof of Corollary \ref{cor main}}
\label{sec products}
\begin{theorem}
\label{thm products}
Let $\Gamma$, $M$ and $g$ be as in Theorem \ref{thm main}. Let $\omega_1,\ldots,\omega_m$ be $\Gamma$-invariant exact $2$-forms on $M$. If $M$ admits a $\Gamma$-invariant Riemannian metric of positive scalar curvature, then
$$\widehat A_g(M,\omega_1\wedge\cdots\wedge\omega_m)=0.$$
\end{theorem}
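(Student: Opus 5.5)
Apply Theorem \ref{thm main} to linear combinations of the $\omega_i$ and extract the multilinear term by polarisation.

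For real parameters $t=(t_1,\ldots,t_m)\in\R^m$, put
$$\omega_t\coloneqq\sum_{i=1}^m t_i\omega_i.$$
This is again a $\Gamma$-invariant exact $2$-form on $M$: if $d\eta_i=\omega_i$, then $\sum_i t_i\eta_i$ is a primitive. So Theorem \ref{thm main} applies to $\omega_t$ and gives
$$\widehat A_g(M,\omega_t^m)=0\quad\text{for every }t\in\R^m.$$
Here we use the fact that in the proof of Theorem \ref{thm main} the form is arbitrary. In particular, $\alpha$-regularity and constancy of the phase come from Lemmas \ref{lem regular} and \ref{lem constant phase}, which hold for any primitive.

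Next, $\xi\mapsto\widehat A_g(M,\xi)$ is linear in $\xi$, since the integral in \eqref{eq A hat} is linear in $\xi$. Because $2$-forms commute in the exterior algebra, the multinomial expansion gives
$$\omega_t^m=\sum_{|\beta|=m}\binom{m}{\beta}t^\beta\,\omega_1^{\beta_1}\wedge\cdots\wedge\omega_m^{\beta_m}.$$
Hence $t\mapsto\widehat A_g(M,\omega_t^m)$ is a homogeneous polynomial in $t$. Its coefficient of $t_1t_2\cdots t_m$ is $m!\,\widehat A_g(M,\omega_1\wedge\cdots\wedge\omega_m)$.

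A polynomial vanishing identically on $\R^m$ has all coefficients zero. Therefore $\widehat A_g(M,\omega_1\wedge\cdots\wedge\omega_m)=0$. Equivalently, one can use the explicit polarisation identity
$$m!\,\omega_1\wedge\cdots\wedge\omega_m=\sum_{\varepsilon\in\{0,1\}^m}(-1)^{m-|\varepsilon|}\omega_\varepsilon^m.$$

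The only point requiring care is that Theorem \ref{thm main} is stated for a single fixed form. We need it uniformly for every $\omega_t$, each with its own primitive, cocycle $\alpha_t$ and multipliers. This causes no difficulty: the hypotheses (polynomial growth of $(g)$, positive scalar curvature, $H^1(M)=0$) do not depend on $\omega$. The case $m=0$ is covered directly by Theorem \ref{thm main} with $k=0$.
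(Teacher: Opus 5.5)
Your argument is correct, but it is organised differently from the paper's. The paper does not use Theorem \ref{thm main} as a black box. It reruns the index argument with an $m$-parameter family of connections $d+i\eta_{\mathbf{s}}$ and writes down the fixed-point formula with the phase $e^{-i\sum_j s_ja_j}$. It then uses the estimate $\norm{c(\omega_{\mathbf{s}})}\leq\sum_j|s_j|\sup\norm{c(\omega_j)}$ together with \cite[Proposition 4.6]{GM} to get vanishing of the index on a neighbourhood $V$ of $0$ in $\R^m$. Finally it reads off the coefficient of $s_1\cdots s_m$ in the polynomial $P(\mathbf{s})$.

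You instead observe that the $\Gamma$-invariant exact $2$-forms form a vector space. You apply Theorem \ref{thm main} to each $\omega_t$ separately and finish by polarisation, which is pure algebra. Your route is shorter and shows that Theorem \ref{thm products} is a formal consequence of Theorem \ref{thm main}. Nothing uniform in $t$ is actually needed, so the word ``uniformly'' in your last paragraph overstates what you use. The $\delta$ in the proof of Theorem \ref{thm main} may depend on $t$, because for each fixed $t$ the conclusion $\widehat A_g(M,\omega_t^m)=0$ is exact. You therefore avoid the one new analytic input of the paper's proof, namely the multi-parameter neighbourhood $V$.

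The paper's route, in exchange, yields an explicit multi-parameter index formula for $D^{\mathbf{s}}$ with its phase, paralleling \cite[Corollary 1.6]{GM}. That formula is not needed for the statement itself.
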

\begin{proof}
As in the proof of Theorem \ref{thm main}, we may assume that $M^g\neq\emptyset$ and that $\dim M$ is even, so that $g$ has finite order. For each $j$, choose a one-form $\eta_j$ with $d\eta_j=\omega_j$ and functions $\phi_{j,\gamma}$ with $\gamma^*\eta_j-\eta_j=d\phi_{j,\gamma}$, normalised as in \eqref{eq normalisation} with respect to a common base point $x_0$. For $\mathbf{s}=(s_1,\ldots,s_m)\in\R^m$, put
$$\eta_{\mathbf{s}}=\sum_{j=1}^m s_j\eta_j,\qquad\omega_{\mathbf{s}}=d\eta_{\mathbf{s}}=\sum_{j=1}^m s_j\omega_j,\qquad\phi_{\mathbf{s},\gamma}=\sum_{j=1}^m s_j\phi_{j,\gamma}.$$
These satisfy \eqref{eq derivative of phi} and \eqref{eq normalisation} for $\omega_{\mathbf{s}}$, and the associated cocycle is $\alpha_{\mathbf{s}}=\sum_j s_j\alpha_j$, with respect to which $g$ is regular by Lemma \ref{lem regular}. Let $D^{\mathbf{s}}$ be the spin-Dirac operator twisted by $d+i\eta_{\mathbf{s}}$, which is $(\Gamma,\sigma_{\mathbf{s}})$-invariant for $\sigma_{\mathbf{s}}=e^{2\pi i\alpha_{\mathbf{s}}}$. By Lemma \ref{lem constant phase}, 
$$\phi_{\mathbf{s},g}\equiv\sum_j s_ja_j$$ on $M^g$, where $a_j$ is the value of $\phi_{j,g}$ on $M^g$. Thus \eqref{eq fixed point formula}, applied to $\omega_{\mathbf{s}}$ with $s=1$, gives
$$\big(\tau^{(g)}_{\sigma_{\mathbf{s}}}\big)_*\Ind_{\Gamma,\sigma_{\mathbf{s}}}(D^{\mathbf{s}})=e^{-i\sum_j s_ja_j}\int_{M^g}c^g\cdot\frac{\widehat A(M^g)\cdot e^{-\omega_{\mathbf{s}}/2\pi i}|_{M^g}}{\det(1-g e^{-R^{\mathcal{N}}/2\pi i})^{1/2}}.$$
The curvature of $d+i\eta_{\mathbf{s}}$ is $i\omega_{\mathbf{s}}$, and $\norm{c(\omega_{\mathbf{s}})}_x\leq\sum_j|s_j|\sup_{y\in M}\norm{c(\omega_j)}_y$, where the suprema are finite since the $\omega_j$ are $\Gamma$-invariant and $M/\Gamma$ is compact. Hence, by \cite[Proposition 4.6]{GM}, if the metric has positive scalar curvature then $\Ind_{\Gamma,\sigma_{\mathbf{s}}}(D^{\mathbf{s}})=0$ for all $\mathbf{s}$ in a neighbourhood $V$ of $0$ in $\R^m$. It follows that
\begin{align*}
P(\mathbf{s})&\coloneqq\int_{M^g}c^g\cdot\frac{\widehat A(M^g)\cdot e^{-\omega_{\mathbf{s}}/2\pi i}|_{M^g}}{\det(1-g e^{-R^{\mathcal{N}}/2\pi i})^{1/2}}\\
&=\sum_{k=0}^{\dim M/2}\frac{1}{k!}\Big(\frac{-1}{2\pi i}\Big)^k\int_{M^g}c^g\cdot\frac{\widehat A(M^g)\cdot\omega_{\mathbf{s}}^k|_{M^g}}{\det(1-g e^{-R^{\mathcal{N}}/2\pi i})^{1/2}}
\end{align*}
vanishes on $V$. Being a polynomial in $s_1,\ldots,s_m$, it vanishes identically on $\R^m$. The monomial $s_1s_2\cdots s_m$ occurs only in the term $k=m$, and since $2$-forms commute, it occurs in $\omega_{\mathbf{s}}^m=(\sum_j s_j\omega_j)^m$ with coefficient $m!\,\omega_1\wedge\cdots\wedge\omega_m$. Its coefficient in $P$ is $$(-1/2\pi i)^m\,\widehat A_g(M,\omega_1\wedge\cdots\wedge\omega_m),$$ 
which must therefore be zero.
\end{proof}

\begin{proof}[Proof of Corollary \ref{cor main}]
Let $\xi$ be the $\Gamma$-invariant lift of a closed form on $M/\Gamma$ representing a class in the subring of $H^*(\underline{B}\Gamma,\R)$ generated by $f^*H^1(\underline{B}\Gamma,\R)\cup f^*H^2(\underline{B}\Gamma,\R)$. Note first that $\widehat A_g(M,\xi)$ depends only on the class of $\xi$ in the orbifold de Rham cohomology of $M/\Gamma$. Indeed, if $\xi=d\theta$ with $\theta$ $\Gamma$-invariant, then, as the characteristic form in \eqref{eq A hat} is closed and $Z^g$-invariant, the integrand in \eqref{eq A hat} is $c^g$ times an exact $Z^g$-invariant form $d\beta$ on $M^g$, and $\int_{M^g}c^g\,d\beta=0$, being the integral of $d\beta$ over the compact orbifold $M^g/Z^g$. By linearity, we may therefore assume that
$$\xi=a_1\wedge\cdots\wedge a_p\wedge\omega_1\wedge\cdots\wedge\omega_q,$$
where the $a_i$ are $\Gamma$-invariant lifts of closed one-forms on $M/\Gamma$ representing classes in $f^*H^1(\underline{B}\Gamma,\R)$, and the $\omega_j$ are $\Gamma$-invariant lifts of closed two-forms representing classes in $f^*H^2(\underline{B}\Gamma,\R)$. The $\omega_j$ are exact by \cite[subsection 2.2]{GM}, and the $a_i$ are exact since $H^1(M)=0$. Recall also from section \ref{sec proof} that each component of $M^g$ has dimension congruent to $\dim M$ modulo $2$.

Suppose first that $\dim M$ is even. If $p$ is odd, then $\xi$ has odd degree, so the integrand in \eqref{eq A hat} has no top-degree part on any component of $M^g$, and $\widehat A_g(M,\xi)=0$. If $p$ is even, write $a_1=du_1$. Then $a_1\wedge a_2=d(u_1a_2)$ is a $\Gamma$-invariant exact $2$-form, and similarly for $a_3\wedge a_4$, and so on. Thus $\xi$ is a product of $p/2+q$ invariant exact $2$-forms, and $\widehat A_g(M,\xi)=0$ by Theorem \ref{thm products}.

Now suppose $\dim M$ is odd. If $\xi$ has even degree, then $\widehat A_g(M,\xi)=0$ by the same parity argument. If $\xi$ has odd degree, we argue by suspension. Let 
$$\widetilde M=M\times\R$$ with the product spin structure and the action of $\widetilde\Gamma=\Gamma\times\Z$ given by 
$$(\gamma,n)(x,t)=(\gamma x,t+n).$$
We now make a number of simple observations. First observe that the action of $\widetilde\Gamma$ is proper and cocompact, and that $\widetilde M$ is connected with $H^1(\widetilde M)=H^1(M)=0$. Note also that the product of a $\Gamma$-invariant metric of positive scalar curvature on $M$ with the standard metric on $\R$ is a $\widetilde\Gamma$-invariant metric of positive scalar curvature on $\widetilde M$. Let $\tilde g=(g,0)$. Then 
$$\widetilde M^{\tilde g}=M^g\times\R.$$ Note that the normal bundle of $\widetilde M^{\tilde g}$ in $\widetilde M$ is the pullback of $\cN$ with the same action of $g$, and $Z^{\tilde g}=Z^g\times\Z$. The conjugacy class $(\tilde g)=(g)\times\{0\}$ has polynomial growth with respect to the word length defined by a generating set of $\Gamma$ together with $1\in\Z$. Since $\underline{E}\Gamma\times\R$ is a model for $\underline{E}\widetilde\Gamma$, the classifying map of $\widetilde M/\widetilde\Gamma=M/\Gamma\times\R/\Z$ is $f\times\mathrm{id}$, and the form
$$\tilde\xi\coloneqq\pr_M^*\xi\wedge dt$$
is the $\widetilde\Gamma$-invariant lift of a form on $\widetilde M/\widetilde\Gamma$ representing a class in the subring of $H^*(\underline{B}\widetilde\Gamma,\R)$ generated by elements of degree at most $2$, as $dt$ descends to a generator of $H^1(\R/\Z,\R)$. Choose $\chi\in C^\infty_c(\R)$ with $\sum_{n\in\Z}\chi(t-n)=1$, so that $\int_\R\chi\,dt=1$. Then $c^g(x)\chi(t)$ is a cut-off function for the action of $Z^{\tilde g}$ on $\widetilde M^{\tilde g}$, and
$$\widehat A_{\tilde g}(\widetilde M,\tilde\xi)=\pm\int_{M^g}c^g\cdot\frac{\widehat A(M^g)\cdot\xi|_{M^g}}{\det(1-g e^{-R^{\mathcal{N}}/2\pi i})^{1/2}}\int_\R\chi\,dt=\pm\widehat A_g(M,\xi).$$
Since $\widetilde M$ is even-dimensional, the left-hand side vanishes by the first part of the proof.
\end{proof}

\hfill\vskip 0.3in
\section*{Acknowledgement}
The author is grateful to Charli Wallace for pointing out that the normalisation \cite[(2.4)]{GM} does not yield condition (ii) of \cite[Definition 2.1]{GM}, which led to Remark \ref{rem normalisation}.

\hfill\vskip 0.3in
\bibliographystyle{plain}
\bibliography{refs}

\end{document}